\newtheorem{theorem}{Theorem}
\newtheorem{lemma}{Lemma}
\newtheorem{definition}{Definition}
\newtheorem{corollary}{Corollary}
\title{On the Multiple Zeta Values $\zeta(\{ 2\}^k)$}
\author{Mario DeFranco}
\begin{document}
\maketitle

\abstract{We evaluate the multiple zeta values $\zeta(\{2\}^k)$ by proving a certain factorization property. The proof uses a combinatorial bijection and elementary telescoping series. We show how the infinite product for the sine function in fact implies its power series and other trigonometric properties. We define two constants, which we call pi-frequency and pi-amplitude, and show that they are equal  and satisfy the geometric definition of pi arising from the circumference of the circle.}

\section{Introduction} 

Multiple zeta values, also known as Euler sums, are certain infinite sums constructed from reciprocals of positive integers. They are important in various fields of mathematics, from number theory to  quantum physics. The general form of a multiple zeta value (``MZV" for short) is 
\[
\zeta(m_1, m_2, ..., m_k) = \sum_{1 \leq n_1 <n_2<...<n_k} \prod_{i=1}^k \frac{1}{n_i^{m_i}}
\]
where $m_i$ are positive integers and the convergence of the sum depends on the $m_i$. The term ``multiple zeta" comes from the fact that for $m>1$
\[
\zeta(m) = \sum_{n=1}^\infty \frac{1}{n^m}
\]
is a special value of the Riemann zeta function. We consider the MZV's of the form 
\[
\zeta(2,2,...,2) 
\]
where there are $k$ 2's, and denote them by $\zeta(\{2\}^k)$.  In this paper, we evaluate these $\zeta(\{2\}^k)$ using a combinatorial bijective proof. 

The evaluation of the zeta value $\zeta(\{2\}^1)=\zeta(2)$ in a closed form is known as the Basel problem. In A.D. 1735, L. Euler gave the solution 
\begin{equation} \label{basel}
\sum_{n=1}^\infty \frac{1}{n^2}= \frac{\pi^{2}}{6} 
\end{equation}
and the straightforward generalization 
\begin{equation} \label{mzv evaluation}
\zeta(\{2\}^k) = \frac{\pi^{2k}}{(2k+1)!}. 
\end{equation}
In the literature (see \cite{Gil}), the proof of this evaluation \eqref{mzv evaluation} traditionally follows that of Euler's which we briefly describe now.  
Euler used two expressions of the sine function and then compared the coefficients of $x$. First, he expressed $\sin(\pi x)$ as an infinite product
\begin{equation}\label{sin pi prod}
\frac{\sin(\pi x)}{\pi} =  x \prod_{n=1}^\infty (1-\frac{x^2}{n^2}). 
\end{equation}
Expanding out this product shows the coefficient of $x^{2k+1}$ to be 
\[
\zeta(\{2\}^k).
\]
Then he used the power series 
\begin{equation} \label{power series}
\frac{\sin(\pi x)}{\pi} = \sum_{k=0}^\infty (-1)^k \frac{\pi^{2k} x^{2k+1}}{(2k+1)!}. 
\end{equation}
Equating the coefficients yields \eqref{mzv evaluation}. We note that \eqref{mzv evaluation} is proved by other means by M. Hoffman in \cite{Hoffman} who attributes it as a conjecture to C. Moen. 

In this paper, we prove \eqref{mzv evaluation} using a combinatorial bijection and elementary telescoping sums. We start by considering the product
\begin{equation}\label{sin prod}
x\prod_{n=1}^\infty \frac{(n-x)(n+x)}{n^2}
\end{equation}  
and do not use its equivalence to sine. We do not assume the power series expansion \eqref{power series} of sine but in fact show that equation \eqref{power series} follows from the properties of the expression 
\eqref{sin prod}. From this context, we show how two constants, which we call pi-frequency and pi-amplitude, naturally arise, and then prove that these constants are equal to each other and to $\pi$, that is, the half-circumference of the unit circle. 
 
 We view these results as connecting the two key properties of the sine function: the ``periodic" property 
 \[
\sin(x+ \pi) = -\sin(x)
 \]
 and the ``derivative" property
 \[
 \frac{d^2}{dx^2} \sin(x) = -\sin(x).
 \]
 The interplay between these two properties makes the sine function central in many fields, such as complex analysis. \textit{A priori} it is not obvious that the infinite product \eqref{sin prod} should have the power series \eqref{power series}, or that the power series \eqref{power series} should be periodic with zeros at the integers. This paper thus helps to understand this connection on a combinatorial level. 
 
\section{$F(x)$ and $F''(x)$}
We start by considering the following infinite product which is a natural construction of a 2-periodic function whose zero set is the integers.
\begin{definition}
Define the function $F(x)$ by 
\begin{align*}
F(x) &= x \prod_{n=1}^\infty (\frac{n-x}{n})(\frac{n+x}{n})\\ 
&= x \prod_{n=1}^\infty (1-\frac{x^2}{n^2}).
\end{align*}
\end{definition}
\begin{lemma} 
$F(x)$ is an entire function whose zero set is $\mathbb{Z}$ and which satisfies 
\[
F(x+1) = -F(x).
\]
\end{lemma}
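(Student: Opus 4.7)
My plan is to work throughout with the truncated partial products
\[
F_N(x) \;=\; x\prod_{n=1}^N \!\left(1-\frac{x^2}{n^2}\right),
\]
each a polynomial of degree $2N+1$, and to derive all three conclusions by controlled passages to the limit $N\to\infty$.

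For entirety and the zero set, I first verify that the infinite product converges locally uniformly on $\mathbb{C}$. On a closed disk $|x|\le R$, once $n>R$ we have $|x^2/n^2|<1$, so $\log(1-x^2/n^2)$ is well defined and bounded by a constant times $R^2/n^2$; since $\sum 1/n^2<\infty$ the tail log-series converges uniformly. Hence $F_N\to F$ uniformly on compacta, and a uniform limit of entire functions is entire. For the zero set, evaluating at an integer $m$ makes one factor vanish (either the outer $x$ when $m=0$, or the $n=|m|$ factor otherwise), so $\mathbb{Z}\subseteq\{F=0\}$. Conversely, at a non-integer $x_0$ every factor is nonzero and the absolute log-series is finite, so the infinite product takes a nonzero value; thus $\{F=0\}=\mathbb{Z}$.

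The heart of the lemma is the functional equation $F(x+1)=-F(x)$, and I would prove it by computing the ratio $F_N(x+1)/F_N(x)$ before taking limits. Rewriting
\[
F_N(x) \;=\; \frac{x}{(N!)^2}\prod_{n=1}^N (n-x)(n+x),
\]
a reindexing of the shifted product gives
\[
F_N(x+1) \;=\; \frac{x+1}{(N!)^2}\,(-x)\!\prod_{k=1}^{N-1}(k-x)\;\prod_{k=2}^{N+1}(k+x).
\]
The overwhelming majority of factors match those in $F_N(x)$; after the telescoping cancellation one is left with the clean identity
\[
\frac{F_N(x+1)}{F_N(x)} \;=\; -\,\frac{N+1+x}{N-x}.
\]
Letting $N\to\infty$ forces the right-hand side to $-1$, giving $F(x+1)=-F(x)$ at every $x\in\mathbb{C}\setminus\mathbb{Z}$, and then on all of $\mathbb{C}$ by continuity.

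I expect the one subtle point to be the passage from the partial-product ratio to the limiting identity, which requires both the locally uniform convergence $F_N\to F$ and the fact that $F(x)\ne 0$ off $\mathbb{Z}$, so that dividing is legitimate in the limit. Both ingredients are already established in the first two steps, so the periodicity reduces to an elementary telescoping computation, independent of any analytic identity for the sine function.
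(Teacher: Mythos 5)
Your proof is correct and follows essentially the same route as the paper: locally uniform convergence of the partial products $F_N$ gives entirety and identifies the zero set, and the telescoping identity $F_N(x+1) = -\frac{N+1+x}{N-x}\,F_N(x)$ yields the functional equation in the limit $N\to\infty$. (Your ratio even carries the minus sign that the paper's displayed intermediate identity omits as a typo; the conclusion is the same.)
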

\begin{proof}
Let 
\[
F_N(x) = x \prod_{n=1}^N (1- \frac{x^2}{n^2}).
\]
Since 
\[
\sum_{n=1}^{\infty} |\frac{x^2}{n^2}|
\]
is absolutely convergent for any $x \in \mathbb{C}$, it follows from a standard result in complex analysis that $F_N(x) \rightarrow F(x)$ uniformly on compact sets $K$, and that the infinite product $F(x)$ defines an entire function which is zero if and only if one of its factors is zero. 

Finally 
\[
F_N(x+1) = \frac{N+1+x}{N-x} F_N(x).
\] 
Taking the limit as $N \rightarrow \infty$ gives 
\[
F(x+1)=-F(x).
\]
This completest the proof.
\end{proof}
\begin{lemma}
\[
F(x) = \sum_{k=0}^\infty (-1)^k \zeta(\{ 2\}^k)x^{2k+1}
\] 
where $\zeta(\{ 2\}^0)$ denotes 1.
\end{lemma}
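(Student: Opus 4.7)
The plan is to expand the finite partial product $F_N(x)$ as a polynomial in $x$, identify its coefficients as truncated MZVs, and then pass to the limit $N\to\infty$ using the uniform convergence already proved in the previous lemma. Since the product is finite, distributing gives
\[
F_N(x)=x\prod_{n=1}^{N}\!\left(1-\frac{x^2}{n^2}\right)=\sum_{k=0}^{N}(-1)^{k}\,e_{k}\!\left(\tfrac{1}{1^2},\tfrac{1}{2^2},\ldots,\tfrac{1}{N^2}\right)x^{2k+1},
\]
where $e_k$ is the $k$-th elementary symmetric polynomial, and
\[
e_k\!\left(\tfrac{1}{1^2},\ldots,\tfrac{1}{N^2}\right)=\sum_{1\le n_1<\cdots<n_k\le N}\prod_{i=1}^{k}\frac{1}{n_i^{2}}=:\zeta_N(\{2\}^k).
\]
By monotone convergence in $N$ (all terms are positive), $\zeta_N(\{2\}^k)\to \zeta(\{2\}^k)$ for each fixed $k$.

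Next I would show that the candidate series $S(x):=\sum_{k\ge 0}(-1)^k\zeta(\{2\}^k)x^{2k+1}$ is entire. A crude bound on the elementary symmetric polynomial by the power sum,
\[
\zeta(\{2\}^k)=e_k\!\left(\tfrac{1}{n^2}\right)\le \frac{\zeta(2)^{k}}{k!},
\]
(which follows from Newton's identity or simply from $k!\,e_k\le p_1^{k}$) shows that $S(x)$ has infinite radius of convergence.

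The last step identifies $S$ with $F$. From the previous lemma, $F_N\to F$ uniformly on every compact $K\subset\mathbb{C}$, and each $F_N$ is a polynomial whose $x^{2k+1}$-coefficient is $(-1)^k\zeta_N(\{2\}^k)$ (and whose even coefficients vanish). By the Cauchy integral formula applied to a circle $|x|=r$, the $x^{2k+1}$-Taylor coefficient of $F$ is
\[
\frac{1}{2\pi i}\oint_{|x|=r}\frac{F(x)}{x^{2k+2}}\,dx
=\lim_{N\to\infty}\frac{1}{2\pi i}\oint_{|x|=r}\frac{F_N(x)}{x^{2k+2}}\,dx
=\lim_{N\to\infty}(-1)^{k}\zeta_N(\{2\}^k)=(-1)^{k}\zeta(\{2\}^{k}),
\]
and similarly the even coefficients vanish. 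Thus $F(x)=S(x)$ on $\mathbb{C}$.

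The only real obstacle is justifying the interchange of limit and coefficient extraction, and the Cauchy integral formula dispatches it in one line using the uniform convergence already in hand. Alternatively, one could bypass complex analysis by a direct Tannery-style estimate on $|F_N(x)-\sum_{k=0}^{N}(-1)^k\zeta_N(\{2\}^k)x^{2k+1}|$ versus $|S(x)-\sum_{k=0}^{N}(-1)^k\zeta(\{2\}^k)x^{2k+1}|$, but the Cauchy-integral route is shorter and cleaner.
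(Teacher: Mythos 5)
Your proposal is correct and follows essentially the same route as the paper: expand the finite product $F_N(x)$ into elementary symmetric polynomials (the truncated sums $\zeta_N(\{2\}^k)$) and pass to the limit using the uniform convergence of $F_N\to F$ established in the preceding lemma. The paper leaves the justification at one sentence, while you supply the details (the bound $\zeta(\{2\}^k)\le \zeta(2)^k/k!$ and the Cauchy-integral extraction of coefficients), but the underlying argument is the same.
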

\begin{proof}
This follows from expanding out the $F_N(x)$ and letting $N \rightarrow \infty$. This is justified by the uniform convergence the $F_N(x)$. 
\end{proof}
\begin{lemma} \label{F p} 
\[
F''(x) = -F(x)  p(x)
\]
where
\[
p(x) = 6(\sum_{n=1}^\infty \frac{1}{n^2(1-\frac{x^2}{n^2})}) - 8 x^2 \sum_{l_2=l_1+1}^\infty \sum_{l_1=1}^\infty \frac{1}{l_1^2 l_2^2(1-\frac{x^2}{l_1^2})(1-\frac{x^2}{l_2^2})}.
\]
\end{lemma}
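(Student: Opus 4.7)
The natural approach is logarithmic differentiation of the truncated product $F_N(x) = x\prod_{n=1}^N(1 - x^2/n^2)$, followed by passage to the limit $N\to\infty$. For $x \in \mathbb{C} \setminus \mathbb{Z}$, differentiating $\log F_N(x) = \log x + \sum_{n=1}^N \log(1 - x^2/n^2)$ term by term gives
\[
\frac{F_N'(x)}{F_N(x)} = \frac{1}{x} - 2x\, S_N(x), \qquad \text{with } S_N(x) = \sum_{n=1}^N \frac{1}{n^2 - x^2},
\]
using $1/(n^2(1-x^2/n^2)) = 1/(n^2-x^2)$. From the identity
\[
\frac{F_N''(x)}{F_N(x)} = \left(\frac{F_N'(x)}{F_N(x)}\right)^{\!2} + \left(\frac{F_N'(x)}{F_N(x)}\right)',
\]
expanding the square and derivative makes the $1/x^2$ contributions cancel, leaving
\[
\frac{F_N''(x)}{F_N(x)} = -6\,S_N(x) + 4x^2 S_N(x)^2 - 2x\, S_N'(x).
\]

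The combinatorial core of the proof is to rewrite $4x^2 S_N^2 - 2x S_N'$ in terms of the off-diagonal double sum appearing in $p(x)$. Squaring and splitting by the diagonal gives
\[
S_N(x)^2 = \sum_{n=1}^N \frac{1}{(n^2-x^2)^2} \; + \; 2 \sum_{1 \leq l_1 < l_2 \leq N} \frac{1}{(l_1^2 - x^2)(l_2^2 - x^2)},
\]
while differentiating $S_N$ termwise yields $S_N'(x) = \sum_{n=1}^N 2x/(n^2-x^2)^2$. Hence $2x\, S_N'(x) = 4x^2 \sum_n 1/(n^2-x^2)^2$ exactly matches the diagonal contribution of $4x^2 S_N^2$ and cancels it, leaving $8x^2$ times the ordered double sum. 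Substituting back yields $F_N''(x)/F_N(x) = -p_N(x)$, where $p_N$ is the partial version of $p$ with the sums truncated at $N$.

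For the limit $N\to\infty$, the uniform convergence $F_N \to F$ on compact sets established in the previous lemma forces $F_N'' \to F''$ uniformly by the standard theorem that differentiation commutes with uniform limits of holomorphic functions. On any compact $K \subset \mathbb{C} \setminus (\mathbb{Z} \setminus \{0\})$, the factors $1/(n^2-x^2)$ are $O(1/n^2)$ uniformly, so $S_N \to S$ and the ordered double sum converges absolutely and uniformly by comparison with $\sum 1/n^2$ and its square. Thus $F_N''/F_N \to F''/F$ and $p_N \to p$ on $K$, giving $F''(x) = -F(x)p(x)$ there; since $F$ and $F''$ are entire, the identity extends meromorphically, with the simple zeros of $F$ at the nonzero integers cancelling the simple poles of $p$.

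The step I expect to be the main obstacle, and indeed the whole point of introducing $p(x)$ in this form, is the algebraic cancellation in the middle paragraph: noticing that $-2x S_N'$ is exactly the negative of the diagonal part of $4x^2 S_N^2$ is what collapses an \emph{a priori} messy expression involving squares and derivatives into the clean ordered double sum stated in the lemma.
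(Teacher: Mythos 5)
Your proposal is correct and is essentially the paper's own argument: both differentiate the partial products $F_N$ twice, identify the single-sum contribution $-6\sum_n 1/(n^2-x^2)$ and the off-diagonal double-sum contribution $8x^2\sum_{l_1<l_2}$, and then pass to the limit via the standard theorem on uniform convergence of holomorphic functions. The only cosmetic difference is that the paper expands $F_N''/F_N$ as $\sum_{f_1\neq f_2} f_1'f_2'/(f_1f_2)+\sum_{f_1} f_1''/f_1$, which excludes the diagonal from the outset, so the cancellation you single out as the main obstacle (the diagonal of $4x^2S_N^2$ against $2xS_N'$) is absorbed into the product-rule bookkeeping rather than performed by hand.
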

\begin{proof}
We successively apply two differentiations $\displaystyle \frac{d}{dx}$ to the product 
\[
F_N(x) =  x \prod_{n=1}^N (1-\frac{x^2}{n^2}).
\] 
By the product rule we have 
\[
F_N'(x) = F_N(x)(\frac{1}{x}-2x \sum_{n=1}^N \frac{1}{n^2(1-\frac{x^2}{n^2})})
\]
and
\begin{equation} \label{f1 f2}
F_N''(x) = F_N(x) \big( (\sum_{f_1\neq f_2} \frac{f_1'(x) f_2'(x)}{f_1(x)f_2(x)})+\sum_{f_1}\frac{f_1''(x)}{f_1(x)}\big)
\end{equation}
where $f_1(x)$ and $f_2(x)$ range over the set of functions consisting of
\[
x \text{ and } (1-\frac{x^2}{n^2})
\]
for $1\leq n \leq N$. For fixed $n$, in the first sum in \eqref{f1 f2}, the pairs $(f_1, f_2)$  of the form 
\[
(x, 1-\frac{x^2}{n^2}) \text{ and } ( 1-\frac{x^2}{n^2}, x)
\]
contribute in total
\[
-\frac{4}{n^2(1-\frac{x^2}{n^2})}.
\]
For fixed $n$, in the second sum in \eqref{f1 f2}, the choice 
\[
f_1(x) = (1-\frac{x^2}{n^2})
\]
contributes 
\[
-\frac{2}{n^2(1-\frac{x^2}{n^2})}
\]
and the choice 
\[
f_1(x) = x
\]
contributes 0. 
For fixed $l_1\neq l_2$, in the first sum in \eqref{f1 f2}, the pairs $(f_1, f_2)$  of the form 
\[
(1-\frac{x^2}{l_1^2}, 1-\frac{x^2}{l_2^2}) \text{ and } ( 1-\frac{x^2}{l_2^2}, 1-\frac{x^2}{l_1^2})
\]
contribute in total
\[
\frac{8x^2}{l_1^2 l_2^2(1-\frac{x^2}{l_1^2})(1-\frac{x^2}{l_2^2})}.
\]
Now it is another standard result of complex analysis (see \cite{Lang}) that, for a sequence of holomorphic $f_N(x)$, the uniformly convergence of $f_N(x) \rightarrow f(x)$ on a compact set $K$ implies the uniform convergence of $f_N'(x) \rightarrow f'(x)$ on $K$. This completes the proof.

\end{proof}
The next theorem implies the recursive factorization of $\zeta(\{ 2\}^k)$ after comparing coefficients of a power series. A key step is using a certain elementary telescoping series. We turn the proof into a combinatorial bijective proof in Section \ref{bijection}.

\begin{theorem} \label{p constant}
Let $p(x)$ denote the function
\begin{equation} \label{p}
p(x) = 6(\sum_{n=1}^\infty \frac{1}{n^2(1-\frac{x^2}{n^2})}) - 8 x^2 \sum_{l_2=l_1+1}^\infty \sum_{l_1=1}^\infty \frac{1}{l_1^2 l_2^2(1-\frac{x^2}{l_1^2})(1-\frac{x^2}{l_2^2})}. 
\end{equation}
Then $p(x)$ is constant in $x$, and 
\[
p(x) = p(0) = 6\sum_{n=1}^\infty \frac{1}{n^2}.
\]
\end{theorem}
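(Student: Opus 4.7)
The plan is to collapse the double sum in $p(x)$ into a single sum by two applications of partial fractions together with an elementary telescoping series. Writing $a_n(x)=1/(n^2-x^2)$, we have
\[
p(x) = 6\sum_n a_n(x) - 8x^2\sum_{l_1<l_2} a_{l_1}(x)\,a_{l_2}(x),
\]
and my starting point is the partial-fraction identity
\[
a_{l_1}(x)\,a_{l_2}(x) \;=\; \frac{a_{l_1}(x)-a_{l_2}(x)}{l_2^2-l_1^2} \qquad (l_1\neq l_2).
\]
Since $|a_l(x)|=O(1/l^2)$ uniformly on compact $K\subset\mathbb{C}\setminus\mathbb{Z}$ and $\sum_{m\neq l}1/|m^2-l^2|=O(\log l/l)$, the doubly-indexed series converges absolutely after this split, so I can reindex as
\[
\sum_{l_1<l_2} a_{l_1}(x)a_{l_2}(x) \;=\; \sum_l a_l(x)\bigl[g(l)-h(l)\bigr], \qquad g(l):=\sum_{m>l}\frac{1}{m^2-l^2},\ \ h(l):=\sum_{m<l}\frac{1}{l^2-m^2}.
\]

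I then compute $g(l)$ and $h(l)$ in closed form via a second partial fraction $\frac{1}{m^2-l^2}=\frac{1}{2l}\bigl[\frac{1}{m-l}-\frac{1}{m+l}\bigr]$. Substituting $m=l+d$ in $g(l)$ gives a textbook telescoping sum that collapses to $g(l)=H_{2l}/(2l)$, where $H_N$ is the $N$th harmonic number. For the finite sum $h(l)$, using $\frac{1}{d(2l-d)}=\frac{1}{2l}\bigl[\frac{1}{d}+\frac{1}{2l-d}\bigr]$ with $m=l-d$ yields $h(l)=\frac{1}{2l}\bigl[H_{l-1}+(H_{2l-1}-H_l)\bigr]=H_{2l-1}/(2l)-1/(2l^2)$. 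The harmonic-number pieces almost cancel in the difference, leaving the clean identity
\[
g(l)-h(l) \;=\; \frac{H_{2l}-H_{2l-1}}{2l} + \frac{1}{2l^2} \;=\; \frac{1}{4l^2}+\frac{1}{2l^2} \;=\; \frac{3}{4l^2},
\]
which is the engine of the whole argument.

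Substituting back gives $\sum_{l_1<l_2}a_{l_1}a_{l_2}=\frac{3}{4}\sum_l\frac{1}{l^2(l^2-x^2)}$, and a final partial fraction $\frac{1}{l^2(l^2-x^2)}=\frac{1}{x^2}\bigl[\frac{1}{l^2-x^2}-\frac{1}{l^2}\bigr]$ converts the right-hand side into $\frac{3}{4x^2}\bigl[\sum_l a_l(x)-\zeta(2)\bigr]$. Plugging into $p(x)$, the factor $8x^2$ and $\frac{3}{4x^2}$ multiply to $6$, and the two copies of $\sum_l a_l(x)$ cancel exactly, giving $p(x)=6\zeta(2)=p(0)$ as claimed. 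I expect the main obstacle to be the Fubini-type justification that permits reindexing the double sum after the first partial fraction, given only $O(\log l/l)$ growth of $g$ and $h$; the telescoping and arithmetic that follow are short, and once $g(l)-h(l)=3/(4l^2)$ is in hand the rest is essentially bookkeeping.
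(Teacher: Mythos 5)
Your proof is correct, and it shares the paper's two essential ingredients --- the partial fraction $a_{l_1}a_{l_2}=(a_{l_1}-a_{l_2})/(l_2^2-l_1^2)$ (the paper writes this same identity in the equivalent form $\tfrac{1}{l_1-l_2}-\tfrac{1}{l_1+l_2}$ acting on each factor) and the telescoping evaluation of $\sum_{m>l}\tfrac{1}{m^2-l^2}$ and $\sum_{m<l}\tfrac{1}{l^2-m^2}$ in terms of $H_{2l}$ and $H_{2l-1}$, with the cancellation $H_{2l}-H_{2l-1}+\tfrac{1}{l}=\tfrac{3}{2l}$ doing the real work in both arguments. The structural difference is that the paper first expands every factor $\bigl(1-\tfrac{x^2}{l^2}\bigr)^{-1}$ as a geometric series and proves that the coefficient of $x^{2j}$ in $p(x)$ vanishes for each $j\geq 1$ separately, which forces a three-fold interchange of summation (over $l_1$, $l_2$, and $j$) and a comparison of coefficients against the first sum at the end. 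You instead keep $x$ fixed, collapse the double sum to $\tfrac{3}{4}\sum_l \tfrac{1}{l^2(l^2-x^2)}$, and then use one further partial fraction $\tfrac{1}{l^2(l^2-x^2)}=\tfrac{1}{x^2}\bigl[\tfrac{1}{l^2-x^2}-\tfrac{1}{l^2}\bigr]$ to cancel against $6\sum_l a_l(x)$ exactly; this last step is the one idea not present in the paper, and it buys you a cleaner Fubini justification (only the $l_1,l_2$ rearrangement, controlled by $|a_l|=O(l^{-2})$ and $g(l),h(l)=O(\log l/l)$), an identity valid for all non-integer $x$ rather than just $0<x<1$, and no power-series bookkeeping. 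The trade-off is that the paper's coefficientwise form is what feeds directly into its combinatorial bijection in Section 3, whereas your closed-form route is better suited to the analytic statement alone. All the computations check out, including the boundary case $l=1$ where $h(1)=0$.
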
 
\begin{proof}
We prove that the coefficient of $x^{2j}$ in $p(x)$ is 0 for $j \geq 1$. It is sufficient to assume $0 <x <1$. We will use the bounds 
\[
\sum_{n=1}^\infty \frac{1}{n^{2j}} < 2
\]
for $j \geq 1$. These bounds follow from comparing the sum with its integral. 

We first consider the second sum in expression $\eqref{p}$. 
By partial fraction decomposition, 
\begin{align*}
 &\frac{8 x^2}{l_1^2 l_2^2(1-\frac{x^2}{l_1^2})(1-\frac{x^2}{l_2^2})} = 4(\frac{1}{l_1-l_2}-\frac{1}{l_1+l_2})\frac{1}{l_2(1-\frac{x^2}{l_2^2})}+4(\frac{1}{l_2-l_1}-\frac{1}{l_2+l_1})\frac{1}{l_1(1-\frac{x^2}{l_1^2})}\\ 
 &= \sum_{j=1}^\infty  4(\frac{1}{l_1-l_2}-\frac{1}{l_1+l_2})\frac{1}{l_2} \frac{x^{2j}}{l_2^{2j}} +4(\frac{1}{l_2-l_1}-\frac{1}{l_2+l_1})\frac{1}{l_1}\frac{x^{2j}}{l_1^{2j}}
\end{align*}

Thus 
\begin{equation} \label{8 l1 l2}
\sum_{l_2=l_1+1}^\infty \sum_{l_1=1}^\infty \frac{8 x^2}{l_1^2 l_2^2(1-\frac{x^2}{l_1^2})(1-\frac{x^2}{l_2^2})}  = \sum_{l_2=l_1+1}^\infty \sum_{l_1=1}^\infty \sum_{j=1}^\infty  4(\frac{1}{l_1-l_2}-\frac{1}{l_1+l_2})\frac{1}{l_2} \frac{x^{2j}}{l_2^{2j}} +4(\frac{1}{l_2-l_1}-\frac{1}{l_2+l_1})\frac{1}{l_1}\frac{x^{2j}}{l_1^{2j}}
\end{equation}
We claim that we may interchange the order of summation. We have  
\begin{align*}
 &\sum_{l_2=l_1+1}^\infty \sum_{l_1=1}^\infty \sum_{j=1}^\infty  \bigl |4(\frac{1}{l_1-l_2}-\frac{1}{l_1+l_2})\frac{1}{l_2} \frac{1}{l_2^{2j}}\bigr | x^{2j}\\ 
 =& 8\sum_{j=1}^\infty  x^{2j} \sum_{l_1= 1}^\infty \sum_{k=1}^\infty  (\frac{1}{k(2l_1+k)}) \frac{1}{(l_1+k)^{2j}} \\ 
 \leq& 8 \sum_{j=1}^\infty  x^{2j} (\sum_{k=1}^\infty  \frac{1}{k^2}) (\sum_{l_1 = 1}^\infty\frac{1}{l_1^{2j}} ) \\ 
\leq& \frac{8\cdot2\cdot2x^2}{1-x^2}
\end{align*}

We have 
\begin{align*}
 &\sum_{l_2=l_1+1}^\infty \sum_{l_1=1}^\infty \sum_{j=1}^\infty  \bigl |4(\frac{1}{l_2-l_1}-\frac{1}{l_2+l_1})\frac{1}{l_1} \frac{1}{l_1^{2j}}\bigr | x^{2j}\\ 
 =&8 \sum_{j=1}^\infty x^{2j}\sum_{l_1=1}^\infty\sum_{k= 1}^\infty  \frac{1}{k(2l_1+k)} \frac{1}{l_1^{2j}} \\ 
 \leq & 8 \sum_{j=1}^\infty x^{2j}(\sum_{k=1}^\infty  \frac{1}{k^2}) (\sum_{l_1= 1}^\infty\frac{1}{l_1^{2j}} )\\ 
\leq& \frac{8\cdot2\cdot2x^2}{1-x^2}
 \end{align*}
Thus by absolute convergence we may re-arrange the sum \eqref{8 l1 l2} to obtain 
\begin{align*}
& \sum_{j=1}^\infty x^{2j}  \left( \bigl( \sum_{l_2=l_1+1}^\infty \sum_{l_1=1}^\infty  4(\frac{1}{l_1-l_2}-\frac{1}{l_1+l_2})\frac{1}{l_2} \frac{1}{l_2^{2j}}\bigr) +  \bigl( \sum_{l_2=l_1+1}^\infty \sum_{l_1=1}^\infty 4(\frac{1}{l_2-l_1}-\frac{1}{l_2+l_1})\frac{1}{l_1}\frac{1}{l_1^{2j}} \bigr) \right).
\end{align*}
 
Now let $H(n)$ denote the harmonic number
\[
H(n) =\sum_{k=1}^n \frac{1}{k}. 
\]
Thus 
\begin{align} \nonumber
&-8x^2\sum_{l_2=l_1+1}^\infty \sum_{l_1=1}^\infty \frac{1}{l_1^2 l_2^2(1-\frac{x^2}{l_1^2})(1-\frac{x^2}{l_2^2})} \\ \label{nl2}
=&\sum_{n=1}^\infty \sum_{j=1}^\infty  (-4(\frac{x^2}{n^2})^j\sum_{ l_2>n}(\frac{1}{l_2-n}-\frac{1}{l_2+n})\frac{1}{n} \\ \label{nl1} 
 &+\sum_{j=1}^\infty  -4(\frac{x^2}{n^2})^j \sum_{n> l_1\geq 1}(\frac{1}{l_1-n}-\frac{1}{l_1+n})\frac{1}{n}.
\end{align}

At expression \eqref{nl2}, for each $n$, the coefficient of $x^{2j}$ is a telescoping series that evaluates to  
\begin{equation}\label{H l2}
-4 H(2n)\frac{1}{n^{2j+1}}. 
\end{equation}
At expression \eqref{nl1}, for each $n$, the coefficient of $x^{2j}$ is a finite sum that evaluates to
\begin{equation} \label{H l1}
4(H(2n-1) - \frac{1}{n}) \frac{1}{n^{2j+1}}. 
\end{equation}
Adding expressions \eqref{H l2} and \eqref{H l1} gives 
\[
-6\frac{1}{n^{2j+2}}.
\]
The coefficient of $x^{2j}$ for $j \geq 1$ in  
\[
6 \sum_{n=1}^\infty \frac{1}{n^2(1-\frac{x^2}{n^2})}
\]
is 
\[
6\frac{1}{n^{2j+2}}.
\]
This completes the proof. 
\end{proof}
\begin{corollary} \label{corollary}
\[
(2k+1)(2k)\zeta(\{ 2\}^k) = \zeta(\{ 2\}^{k-1}) (6 \sum_{n=1}^\infty \frac{1}{n^2}).
\]
and therefore
\[
\zeta(\{ 2\}^k) =\frac{(6 \sum_{n=1}^\infty \frac{1}{n^2})^k }{(2k+1)!}.
\]

\end{corollary}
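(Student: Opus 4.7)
The plan is to combine the three preceding results: the power series $F(x) = \sum_{k \geq 0}(-1)^k \zeta(\{2\}^k) x^{2k+1}$ from the second lemma, the identity $F''(x) = -F(x)p(x)$ from Lemma \ref{F p}, and the fact that $p(x) \equiv p(0) = 6\sum 1/n^2$ from Theorem \ref{p constant}. With $p$ reduced to the single number $p(0)$, the functional equation $F''(x) = -p(0)F(x)$ becomes a linear recurrence on the coefficients of $F$, and the corollary falls out by matching coefficients and then iterating.

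Concretely, I would first differentiate the power series term-by-term (justified, as in Lemma \ref{F p}, by the uniform convergence of $F_N \to F$ on compact sets) to obtain
\[
F''(x) = \sum_{k=1}^\infty (-1)^k (2k+1)(2k)\,\zeta(\{2\}^k)\, x^{2k-1}.
\]
On the other hand, by Theorem \ref{p constant},
\[
-F(x)\,p(x) = -p(0)\sum_{k=0}^\infty (-1)^k \zeta(\{2\}^k)\,x^{2k+1} = \sum_{k=1}^\infty (-1)^k p(0)\,\zeta(\{2\}^{k-1})\, x^{2k-1},
\]
after shifting the index. Equating the coefficients of $x^{2k-1}$ for $k \geq 1$ yields
\[
(2k+1)(2k)\,\zeta(\{2\}^k) = p(0)\,\zeta(\{2\}^{k-1}) = \Bigl(6 \sum_{n=1}^\infty \tfrac{1}{n^2}\Bigr)\zeta(\{2\}^{k-1}),
\]
which is the first displayed identity.

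For the closed form, I would induct on $k$ using the convention $\zeta(\{2\}^0) = 1$ established earlier. The base case $k=0$ reads $1 = (6\sum 1/n^2)^0/1!$, which is trivial. For the inductive step, substituting the hypothesis $\zeta(\{2\}^{k-1}) = (6\sum 1/n^2)^{k-1}/(2k-1)!$ into the recurrence and dividing by $(2k+1)(2k)$ gives
\[
\zeta(\{2\}^k) = \frac{6\sum_{n=1}^\infty 1/n^2}{(2k+1)(2k)} \cdot \frac{(6\sum_{n=1}^\infty 1/n^2)^{k-1}}{(2k-1)!} = \frac{(6\sum_{n=1}^\infty 1/n^2)^k}{(2k+1)!},
\]
completing the induction. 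There is no real obstacle here: the corollary is essentially bookkeeping once Theorem \ref{p constant} is in hand, and the only thing that requires care is the justification of term-by-term differentiation of $F$, which is already covered by the uniform-convergence machinery invoked in the earlier lemmas.
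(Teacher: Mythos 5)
Your proof is correct and follows the same route as the paper: the paper's own argument is simply ``Lemma \ref{F p} and Theorem \ref{p constant} give $F''(x) = -F(x)\bigl(6\sum_{n\geq 1} n^{-2}\bigr)$; compare coefficients,'' and your write-up just carries out that coefficient comparison and the resulting induction explicitly. No further comment is needed.
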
 
\begin{proof}
From 
\[
F(x) = \sum_{k=0}^\infty (-1)^k \zeta(\{ 2\}^k)x^{2k+1},
\]
Lemma \ref{F p} and Theorem \ref{p constant} imply that
\[
F''(x) = F(x) (6 \sum_{n=1}^\infty \frac{1}{n^2}).
\]
Comparing coefficients proves the corollary.
\end{proof}

We make the proof of the above corollary combinatorial in the next section.

\section{Bijection for $\zeta(\{2\}^k)$} \label{bijection}
We fix integer $k \geq 2$. We show how to add and subtract terms to $(2k+1)(2k)\zeta(\{ 2\}^k)$ to factor out $6\zeta(2)$. (The terms that we add and subtract are referred to as $\alpha$-components below.) We index the terms in $(2k+1)(2k)\zeta(\{ 2\}^k)$ by defining a graph $G(k)$ with infinite vertex set $V(k)$.  
\subsection{The vertices $V(k)$}

Let $k\geq2$. Define $V_1(k)$ to be the set of vertices $v$
 \[
 v = (\mu; n)
 \]
where $n$ is any positive integer, and $\mu$ is a set of positive integers 
\[
\mu = \{\mu(1), \mu(2), ..., \mu(j) \}
\]
with $\mu(i) < \mu(i+1)$ and  $0 \leq j = |\mu| \leq k-1$. We say that $v$ is 1-distinct if $n \notin \mu$ and 0-distinct if $n \in \mu$.  We say that $v$ is of order $j$ and write $|v|=j$. 

Define $V_2(k)$ to be the set of vertices $v$ 
\[
v = (\mu; l_1, l_2; \epsilon)
\]
where $l_1<l_2$ are any positive integers; $\epsilon \in \{ 1,2\}$; and $\mu$ is a set of positive integers  
\[
\mu = \{\mu(1), \mu(2), ..., \mu(j) \}
 \]
 with $\mu(i) < \mu(i+1)$  and $0 \leq j = |\mu| \leq k-2$. We say that $v$ is 2-distinct if $l_1 \notin \mu$ and $l_2 \notin \mu$. We say that $v$ is 1-distinct if exactly one of $l_1$ and $l_2$ is in $\mu$. We say that $v$ is 0-distinct if both $l_1$ and $l_2$ are in $\mu$. We say that $v$ is of order $j$ and write $|v|=j$. 

Define 
\[
V(k) = V_1(k) \cup V_2(k).
\]

Define the function 
\[
t_k: V(k) \rightarrow \mathbb{R}
\]
 by
\[
t_k(\{\mu(1), \mu(2), ..., \mu(j) \}; n) = 6(-1)^{j+1} (\prod_{i=1}^{j} \frac{1}{\mu(i)^2}) (\frac{1}{n^2})^{k-j}
\]
and 
\begin{align*}
t_k(\{\mu(1), \mu(2), ..., \mu(j) \}; l_1, l_2; \epsilon) &= 4(-1)^j  (\prod_{i=1}^j \frac{1}{\mu(i)^2}) \frac{1}{l_\epsilon^{2(k-j)-1}}(\frac{(-1)^{\epsilon-1}}{l_2 - l_1} - \frac{1}{l_1 + l_2})\\ 
                                                                                    &= 8(-1)^j  (\prod_{i=1}^j \frac{1}{\mu(i)^2})\frac{(-1)^{\epsilon-1}}{l_\epsilon^{2(k-j-1)}(l_2^2 - l_1^2)}. 
\end{align*}

  

\begin{lemma} 
For $k \geq 2$, the sum
\[
\sum_{v \in V(k)} t_k(v)
\]
is absolutely convergent. 
\end{lemma}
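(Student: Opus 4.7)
The plan is to split the sum according to the decomposition $V(k) = V_1(k) \cup V_2(k)$, further stratified by the order $j = |\mu|$ and, in the $V_2$ case, by $\epsilon$. For each stratum I bound the sum of absolute values by a product of an MZV $\zeta(\{2\}^j)$ coming from the $\mu$-factor and a pair of zeta values coming from the remaining denominators; the constraints $j \leq k-1$ on $V_1(k)$ and $j \leq k-2$ on $V_2(k)$ will ensure every resulting zeta value has argument at least $2$. Since there are only finitely many strata (for $j = 0, 1, \dots, k-1$ and $\epsilon \in \{1,2\}$), absolute convergence follows.

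For the $V_1(k)$ piece, I would observe that
\[
\sum_{v \in V_1(k)} |t_k(v)| \;=\; 6 \sum_{j=0}^{k-1} \Bigl(\sum_{\mu(1)<\cdots<\mu(j)} \prod_{i=1}^{j}\frac{1}{\mu(i)^2}\Bigr) \Bigl(\sum_{n=1}^{\infty} \frac{1}{n^{2(k-j)}}\Bigr) \;=\; 6 \sum_{j=0}^{k-1} \zeta(\{2\}^j)\, \zeta(2(k-j)),
\]
which is a finite sum of convergent terms because $k - j \geq 1$ throughout and $\zeta(\{2\}^j)$ is bounded (e.g.\ by $4^j/j!$ via the elementary estimate mentioned in the comment after Lemma 1).

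The main obstacle is the $V_2(k)$ piece, where the factor $\frac{1}{l_2^2 - l_1^2}$ looks delicate. I would handle it by the substitution $m = l_2 - l_1 \geq 1$, so that $l_2^2 - l_1^2 = m(2l_1+m)$. For $\epsilon = 1$ the summand's absolute value is $\frac{1}{l_1^{2(k-j-1)}\, m(2l_1+m)}$, which is dominated by $\frac{1}{l_1^{2(k-j-1)}\, m^2}$ using $2l_1+m \geq m$. For $\epsilon = 2$ the same inequality together with $l_2 = l_1 + m \geq l_1$ gives the same majorant $\frac{1}{l_1^{2(k-j-1)}\, m^2}$. Summing over $l_1$ and $m$ yields $\zeta(2(k-j-1))\,\zeta(2)$, which is finite precisely because $j \leq k-2$ forces $2(k-j-1) \geq 2$.

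Combining these estimates gives the overall bound
\[
\sum_{v \in V(k)} |t_k(v)| \;\leq\; 6 \sum_{j=0}^{k-1} \zeta(\{2\}^j)\,\zeta(2(k-j)) \;+\; 16 \sum_{j=0}^{k-2} \zeta(\{2\}^j)\,\zeta(2(k-j-1))\,\zeta(2),
\]
which is manifestly finite, completing the proof. No interchange of summation is invoked at this stage; only a crude majorization is needed, and the only subtlety is choosing the right bound on $\frac{1}{l_2^2 - l_1^2}$ to separate the $l_1$ and $m$ sums cleanly.
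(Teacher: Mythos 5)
Your proposal is correct and follows essentially the same route as the paper: stratify by $j=|\mu|$ (and $\epsilon$), pull out $\zeta(\{2\}^j)$ from the $\mu$-sum, and for $V_2(k)$ substitute $m=l_2-l_1$ and use $l_2^2-l_1^2=m(2l_1+m)\geq m^2$ together with $l_2\geq l_1$ to majorize both $\epsilon$-cases by a product of zeta values, arriving at the same bound $6\sum_j\zeta(\{2\}^j)\zeta(2(k-j))+16\sum_j\zeta(\{2\}^j)\zeta(2)\zeta(2(k-j-1))$. No substantive differences.
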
 
\begin{proof}

Summing over all vertices $V_1(k)$, both $0$-distinct and $1$-distinct, we have 
\[
\sum_{v \in V_1(k),\,\, |v| = j} |t_k(v)|  \leq 6\zeta(\{ 2\}^j) \zeta(2(k-j)) 
\]
Likewise summing over all vertices in $V_2(k)$ for fixed $l_1<l_2$ gives
\[
\sum_{|\mu|=j}|t(\mu; l_1, l_2, 1)| = 8\zeta(\{2\}^j) \frac{1}{l_1^{2(k-j-1)}(l_2^2 - l_1^2)}
\]
and
\[
\sum_{|\mu|=j}|t(\mu; l_1, l_2, 2)| = 8\zeta(\{2\}^j) \frac{1}{l_2^{2(k-j-1)}(l_2^2 - l_1^2)}.
\]
Thus
\begin{align*}
\sum_{v \in V_2(k), \, \, |v| = j} |t_k(v)|  &= \sum_{l_2 = l_1+1}\sum_{l_1=1} \big( 8\zeta(\{2\}^j) \frac{1}{l_1^{2(k-j-1)}(l_2^2 - l_1^2)}+  8\zeta(\{2\}^j) \frac{1}{l_2^{2(k-j-1)}(l_2^2 - l_1^2)}\big)\\
& \leq 16\zeta(\{2\}^j) \sum_{n=1}^\infty \sum_{l_1=1}^\infty\frac{1}{l_1^{2(k-j-1)}((l_1+n)^2-l_1^2)} \\ 
& \leq 16\zeta(\{2\}^j) \sum_{n=1}^\infty \sum_{l_1=1}^\infty\frac{1}{l_1^{2(k-j-1)}(2nl_1+n^2)} \\ 
& \leq 16\zeta(\{2\}^j) (\sum_{n=1}^\infty \frac{1}{n^2})(\sum_{l_1=1}^\infty\frac{1}{l_1^{2(k-j-1)}}) \\ 
& =  16\zeta(\{2\}^j)\zeta(2)\zeta(2(k-j-1))
\end{align*}
This completes the proof. 
\end{proof}

We next draw edges on $V(k)$. The edges are of two types, $\alpha$ and $\beta$. 

\subsection{The $\alpha$-edges} 

\begin{definition} 
Let $\mu$ be a set of positive integers 
\[
\mu = \{ \mu(1), \mu(2), ..., \mu(j)\}
\]
with $\mu(i) < \mu(i+1)$. Let $\hat{\mu}_i$ denote the set
\[
\mu = \{ \mu(1),...,\mu(i-1), \mu(i+1), ...,  \mu(j)\}.
\]
\end{definition}

All sets have order less than $k$. 

1. For each set $\mu$, draw an $\alpha$-edge between the vertices 
\[
(\mu; \mu(i)) \text{ and }(\hat{\mu}_i; \mu(i)). 
\]

2. For each set $\mu$, draw an $\alpha$-edge between the vertices 
\[
(\mu; l_1,l_2; 1) \text{ and } (\mu; l_1,l_2; 2). 
\]

3. Suppose we have sets $\lambda, \mu$ and $\nu$ such that 
\[
\lambda = \hat{\mu}_i=\hat{\nu}_j
\]
for some (possibly equal) indices $i$ and $j$. Also suppose that $\mu(i) \notin \nu$ and $\nu(j) \notin \mu$ and, without loss of generality, that $\mu(i)<\nu(j)$. Then draw three $\alpha$-edges between the three vertices 
\[
(\mu; \mu(i),\nu(j); 1), \,\,\,(\nu; \mu(i) , \nu(j); 1), \text{ and } (\lambda; \mu(i), \nu(j) ;1).
\]

4. For each set $\rho$ and indices $i<j$, draw an $\alpha$-edge between the vertices 
\[
(\rho; \rho(i), \rho(j); 1) \text{ and } (\hat{\rho}_j; \rho(i), \rho(j); 1). 
\]

Let $G(k, \alpha)$ be the graph with vertex set $V(k)$ with the $\alpha$-edges. Let an $\alpha$-component refer to a component of $G(k, \alpha)$ with at least one $\alpha$-edge.  

\subsection{The $\beta$-edges}

1. Suppose 
\[
u = (\mu; n)
\]
is a 1-distinct vertex with $2\leq |\mu|\leq k-2$. Then draw $\beta$-edges between $u$ and all 2-distinct vertices of the form 
\[
(\mu; n,l_2; 1) \text{ and } (\mu; l_1,n; 2).
\]
Draw $\beta$-edges between $u$ and all 1-distinct vertices of the form 
\[
(\mu; n,\mu(i); 1) \text{ and } (\mu; \mu(i'),n; 2).
\]
for any indices $i$, $i'$ of $\mu$ (so $n \notin \mu$). 

2. Suppose 
\[
u = (\mu; n)
\]
is a 0-distinct vertex with $2\leq |\mu|\leq k-2$. Then draw $\beta$-edges between $u$ and all 1-distinct vertices of the form 
\[
(\mu; n,l_2; 1) \text{ and } (\mu; l_1,n; 2)
\]
(so $n \in \mu$ and  $l \notin \mu$ ). Draw $\beta$-edges between $u$ and all 0-distinct vertices of the form 
\[
(\mu; n,\mu(i); 1) \text{ and } (\mu;\mu(i'),n; 2).
\]
for any indices $i$, $i'$ of $\mu$ (now $n \in \mu$). 

3. Suppose 
\[
u = (\mu; n)
\]
is a 1-distinct vertex with $|\mu|=1$. Then draw $\beta$-edges between $u$ and all 2-distinct vertices of the form 
\[
(\mu; n,l_2; 1) \text{ and } (\mu; l_1,n; 2). 
\]
Draw a $\beta$-edge between $u$ and the 1-distinct vertex of the form
\[
(\mu; n,\mu(1); 1), \text{ if } n < \mu(1),
\]
or draw a $\beta$-edge between $u$ and the 1-distinct vertex of the form
\[
(\mu; \mu(1),n; 2), \text{ if } n > \mu(1).
\]
4. Suppose 
\[
u = (\mu; \mu(1))
\]
is a 0-distinct vertex with $|\mu|=1$. Then draw $\beta$-edges between $u$ and all 1-distinct vertices of the form 
\[
(\mu; \mu(1), l_2; 1) \text{ and } (\mu; l_1,\mu(1); 2). 
\]

5. Suppose 
\[
u = (\emptyset; n).
\]
The draw $\beta$-edges between $u$ and all vertices of the form 
\[
(\emptyset; l_1, l_2; 1) \text{ and } (\emptyset; l_1, l_2; 2).
\]

Let $G(k, \beta)$ be the graph with vertex set $V(k)$ with the $\beta$-edges. Let a $\beta$-component refer to a component of $G(k, \beta)$ with at least one $\beta$-edge.  

\begin{theorem}
\[
(2k+1)(2k)\zeta(\{ 2\}^k) = \zeta(\{ 2\}^{k-1}) (6 \sum_{n=1}^\infty \frac{1}{n^2} )
\]
\end{theorem}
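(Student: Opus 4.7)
The plan is to realize both sides of the identity as sums of $t_k$ over distinguished subsets of $V(k)$ and match them via the graph structure of $G(k, \alpha) \cup G(k, \beta)$. Starting from Lemma \ref{F p} together with the power series $F(x) = \sum_{k=0}^\infty (-1)^k \zeta(\{2\}^k)\, x^{2k+1}$, the identity $F''(x) = -F(x)\, p(x)$ gives $(-1)^k (2k+1)(2k)\, \zeta(\{2\}^k)$ as the coefficient of $x^{2k-1}$ on the left. On the right, multiplying the power series of $F(x)$ by the expansion of $p(x)$ and applying the partial-fraction decomposition of the second sum of $p(x)$ exactly as in the proof of Theorem \ref{p constant}, the same coefficient becomes
\[
\sum_{v \in V(k)} t_k(v);
\]
contributions from the first sum of $p(x)$ give the $V_1$-vertex terms, and those from the second give the $V_2$-vertex terms, with the two values $\epsilon \in \{1, 2\}$ recording the two pieces of the partial-fraction decomposition. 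The absolute-convergence lemma above justifies all rearrangements.

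By inspecting the five $\beta$-edge types, a $V_1$-vertex $(\mu; n)$ carries no $\beta$-edge if and only if $|\mu| = k-1$; every other vertex of $V(k)$ lies in some $\beta$-component. The ``$\beta$-isolated'' contribution is
\[
\sum_{|\mu|=k-1,\, n \geq 1} t_k((\mu; n)) \,=\, 6(-1)^k\, \zeta(\{2\}^{k-1})\, \zeta(2),
\]
which, up to the common sign $(-1)^k$, matches the right-hand side of the theorem. The theorem will therefore follow once I show that each $\beta$-component sums to $0$ in $t_k$.

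The vanishing of a $\beta$-component is driven by the elementary telescoping identity $\sum_{l \geq 1,\, l \neq n} \frac{1}{l^2 - n^2} = \frac{3}{4n^2}$, proved by the partial fraction $\frac{1}{l^2 - n^2} = \frac{1}{2n}\bigl(\frac{1}{l-n} - \frac{1}{l+n}\bigr)$ and combining the infinite telescoping sum over $l > n$ with the finite sum over $1 \leq l < n$. For a $\beta$-component with hub $u = (\mu; n)$ and $|\mu| \geq 1$, every $\beta$-target has the form $(\mu; n, l; 1)$ with $l > n$ or $(\mu; l, n; 2)$ with $l < n$, where $l$ ranges over all positive integers different from $n$; both choices give the same $t_k$-value, and summing yields
\[
t_k(u) + \frac{8(-1)^{|\mu|}}{n^{2(k-|\mu|-1)}} \prod_i \mu(i)^{-2} \cdot \sum_{l \neq n} \frac{1}{l^2 - n^2} = 0.
\]
The $\alpha$-edges function as bookkeeping: type 1 merges 0-distinct and 1-distinct $V_1$-vertices so that the target sum $\sum_{l \neq n}$ cleanly combines the $l \in \mu$ and $l \notin \mu$ pieces, while types 2--4 encode the symmetry between the two $\epsilon$-choices at each $V_2$-vertex. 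For the exceptional $\mu = \emptyset$ case---where a single $\beta$-component contains every $(\emptyset; n)$ and every $(\emptyset; l_1, l_2; \epsilon)$ at once---one reassigns each $(\emptyset; l_1, l_2; \epsilon)$ to the hub $(\emptyset; l_\epsilon)$, reducing the component sum to the same telescoping identity applied subsum-by-subsum over $n$.

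The main obstacle is this boundary case analysis. The $\beta$-edge prescriptions at $|\mu| \in \{0, 1\}$ (types 3--5) differ from the uniform rule at $2 \leq |\mu| \leq k-2$, and one must verify in each case that the $\beta$-targets of a given $V_1$-hub enumerate exactly the positive integers $l \neq n$ with the correct $\epsilon$-assignment. Once this is confirmed, the whole argument collapses to the single partial-fraction identity, and equating the two evaluations of $\sum_{v \in V(k)} t_k(v)$ yields $(2k+1)(2k)\, \zeta(\{2\}^k) = 6\, \zeta(2)\, \zeta(\{2\}^{k-1})$.
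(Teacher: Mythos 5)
Your proposal is correct, and on the decisive half it is the paper's own argument: both proofs evaluate $\sum_{v\in V(k)}t_k(v)$ a second time by showing every $\beta$-component sums to zero via the telescoping identity $\sum_{l\neq n}\frac{1}{l^2-n^2}=\frac{3}{4n^2}$ (the paper's computation $-4H(2n)n^{-2j-1}+4(H(2n-1)-\frac{1}{n})n^{-2j-1}=-6n^{-2j-2}$ is exactly this), leaving only the $V_1$-vertices with $|\mu|=k-1$, whose total is $(-1)^k\,6\zeta(2)\zeta(\{2\}^{k-1})$; your star/hub description, the common leaf value $8(-1)^{|\mu|}\prod_i\mu(i)^{-2}\,n^{-2(k-|\mu|-1)}(l^2-n^2)^{-1}$ for both $\epsilon$-types, and your regrouping of the single large $\mu=\emptyset$ component all check out against the edge definitions. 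Where you genuinely differ is the first evaluation. The paper never leaves the graph: each (finite) $\alpha$-component sums to zero, the vertices lying in no $\alpha$-component are precisely the $1$-distinct $(\mu;n)$ with $|\mu|=k-1$ together with the $2$-distinct $(\mu;l_1,l_2;\epsilon)$ with $|\mu|=k-2$, and the counting identity $6k+8\binom{k}{2}=(2k+1)(2k)$ turns their total into $(-1)^k(2k+1)(2k)\zeta(\{2\}^k)$. You instead obtain $\sum_v t_k(v)=(-1)^k(2k+1)(2k)\zeta(\{2\}^k)$ by multiplying the power series of $F$ and $p$ in $F''=-Fp$; this is valid (your matching of the two sums in $p(x)$ with the $V_1$- and $V_2$-terms is right, and the absolute-convergence lemma licenses the rearrangement), but it imports the analytic Lemma \ref{F p} and the partial-fraction expansion from Theorem \ref{p constant} back into what the paper intends as a self-contained combinatorial double count, and it leaves the $\alpha$-edges doing no work in your argument --- in the paper they are not ``bookkeeping'' but the entire left-hand evaluation. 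The trade-off: the paper's route is purely combinatorial once absolute convergence is granted, while yours is shorter at the cost of the bijective character that is the point of this section; the boundary-case verification you flag as the main obstacle does go through exactly as you describe.
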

\begin{proof} 

Let $C_\alpha$ be an $\alpha$-component of $G(k,\alpha)$. Then there are only finitely many vertices in $C_\alpha$ and it is straightforward to check that
\[
\sum_{v \in C_\alpha} t(v)=0.
\]
Let $C_\beta$ be an $\beta$-component of $G(k,\beta)$. Then there are infinitely vertices $C_\beta$, and, using the telescoping series in the proof of Theorem \ref{p constant}, we check that
\[
\sum_{v \in C_\beta} t(v)=0.
\]

The only vertices not in an $\alpha$-component are 1-distinct vertices of the form 
\[
(\mu; n)
\] 
with $|\mu|=k-1$ and 2-distinct vertices of the form 
\[
(\mu; l_1,l_2; 1) \text{ and } (\mu; l_1,l_2; 2) 
\] 
with $|\mu|=k-2$.  This implies that 
\begin{equation}\label{t sum alpha}
(-1)^k\sum_{v \in V(k)} t(v)=  (6\sum_{1\leq n_1 <...<n_{k-1}; 1\leq n_k } \prod_{i=1}^k \frac{1}{n_i^2})  + (8\sum_{1\leq n_1 <...<n_{k-2}; 1\leq n_{k-1}<n_ k} \prod_{i=1}^k \frac{1}{n_i^2})
\end{equation}
The identity 
\[
6 k+ 8{ k \choose 2} = (2k+1)(2k)
\]
implies that the expression \eqref{t sum alpha} is equal to 
\[
(2k+1)(2k) \zeta(\{ 2\}^k).
\]

The only vertices not in a $\beta$-component are 1-distinct and 0-distinct vertices of the form 
\[
(\mu; n)
\] 
with $|\mu|=k-1$. 
This implies that 
\[
(-1)^k\sum_{v \in V(k)} t(v) = \zeta(\{ 2\}^{k-1})6\sum_{n=1}^\infty \frac{1}{n^2}. 
\]
This completes the proof.
\end{proof}

\section{Connection to $\pi$}
We now show that $F(x)$ can be used to parametrize the unit circle. We define constants pi-frequency $\pi_{\mathrm{freq}}$ and pi-amplitude $\pi_{\mathrm{amp}}$. These names derive from the function
\[
G(x) = \pi_{\mathrm{amp}} F(\frac{x}{\pi_{\mathrm{freq}}})
\]
which we prove to be equal to $\sin(x)$, that is, the $y$-coordinate of a point on the unit circle. 

\begin{definition} 
Define the number pi-frequency $\pi_{\mathrm{freq}}$ by 
\[
\pi_{\mathrm{freq}} = (6 \sum_{n=1}^\infty \frac{1}{n^2})^{\frac{1}{2}}.
\]
\end{definition}

Thun we express by Corollary \ref{corollary}
\begin{equation} \label{F power series}
F(x) = \sum_{k=0}^\infty (-1)^k \pi_{\mathrm{freq}}^{2k}\frac{x^{2k+1}}{(2k+1)!}
\end{equation}
and
\[
F''(x) = - \pi_{\mathrm{freq}}^2 F(x).
\]
\begin{lemma}\label{F inc}
The maximum of the function $ F(x)$ occurs at $x = \frac{1}{2}$. The function $F(x)$ is increasing on $[0,\frac{1}{2}]$ and decreasing on $[\frac{1}{2},1]$. 
\end{lemma}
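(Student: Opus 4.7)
The plan is to combine three ingredients already established in the paper: the sign of $F$ on $(0,1)$, the second-order differential equation $F''(x) = -\pi_{\mathrm{freq}}^2 F(x)$, and a symmetry $F(1-x) = F(x)$ derived from the anti-periodicity.

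First I would show that $F(x) > 0$ for $x \in (0,1)$. This is immediate from the product expansion $F(x) = x \prod_{n=1}^\infty (1 - x^2/n^2)$, since for $0 < x < 1$ every factor $(1 - x^2/n^2)$ is strictly positive. Combined with $F'' = -\pi_{\mathrm{freq}}^2 F$ (from Corollary \ref{corollary} and Lemma \ref{F p}), this gives $F''(x) < 0$ on $(0,1)$, so $F$ is \emph{strictly concave} on $[0,1]$. In particular $F'$ is strictly decreasing on $(0,1)$.

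Next I would establish the reflection symmetry $F(1-x) = F(x)$. The function $F$ is odd in $x$ (from the product, which pairs $(n-x)(n+x)$ with the factor $x$), so $F(-x) = -F(x)$. Combined with the anti-periodicity $F(y+1) = -F(y)$ from Lemma~1, setting $y = -x$ gives $F(1-x) = -F(-x) = F(x)$. Differentiating yields $F'(1-x) = -F'(x)$, and in particular $F'(1/2) = 0$.

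Finally, I would conclude: since $F'$ is strictly decreasing on $(0,1)$ and vanishes at $1/2$, it is strictly positive on $[0,1/2)$ and strictly negative on $(1/2,1]$. Hence $F$ is strictly increasing on $[0,1/2]$ and strictly decreasing on $[1/2,1]$, with its unique maximum at $x = 1/2$.

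I do not anticipate a real obstacle here — every needed fact is a one-line consequence of results already proved. The only small subtlety is making sure the symmetry $F(1-x) = F(x)$ is derived cleanly from the oddness and anti-periodicity rather than from any trigonometric identity (which we have not yet established at this point in the paper).
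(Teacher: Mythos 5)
Your proof is correct, but it takes a genuinely different route from the paper. The paper's argument is purely algebraic and local to the product formula: it regroups the absolutely convergent product as $F(x)=\prod_{n=0}^\infty (n+x)(n+1-x)/(n+1)^2$, observes that each factor $(n+x)(n+1-x)$ is nonnegative and increasing on $[0,\tfrac12]$ and decreasing on $[\tfrac12,1]$, and concludes monotonicity of the product directly --- no derivatives, no differential equation, and no appeal to the heavy machinery of Theorem \ref{p constant}. Your argument instead leans on $F''=-\pi_{\mathrm{freq}}^2F$ (which is legitimately available at this point in the paper) together with positivity of $F$ on $(0,1)$ and the reflection symmetry $F(1-x)=F(x)$ obtained from oddness plus anti-periodicity. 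What your route buys is strictness of the monotonicity, the explicit facts $F'(\tfrac12)=0$ and $F'(1-x)=-F'(x)$, and the symmetry $F(1-x)=F(x)$ --- all of which the paper uses implicitly later when it asserts that $G$ is increasing on $[-\tfrac{\pi_{\mathrm{freq}}}{2},\tfrac{\pi_{\mathrm{freq}}}{2}]$ and that $G'$ is even; so your version would actually streamline the subsequent theorem. What the paper's route buys is independence from the analytic results: the lemma remains true and provable straight from the definition of $F$, which fits the paper's stated goal of deriving the analytic properties \emph{from} the product. Two small points to tighten in your write-up: (i) note that $F(x)>0$ on $(0,1)$ requires not just positivity of each factor but convergence of the product to a nonzero limit, which follows from $\sum x^2/n^2<\infty$; and (ii) if ``maximum'' is read as the global maximum over $\mathbb{R}$, both your proof and the paper's need the one-line addendum that $F$ is $2$-periodic and nonpositive on $[1,2]$, so the maximum over $[0,1]$ is the maximum over $\mathbb{R}$.
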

\begin{proof}
We may express $F(x)$ as
\[
F(x) = \prod_{n=0}^\infty\frac{(n+x)(n+1-x)}{(n+1)^2}. 
\]
Each factor 
\[
(n+x)(n+1-x)
\]
is increasing on $[0,\frac{1}{2}]$ and decreasing on $[\frac{1}{2},1]$. This completes the proof.

\end{proof}

\begin{definition} 
Define the number pi-amplitude $\pi_{\mathrm{amp}}$ by 
\[
\pi_{\mathrm{amp}} = F(\frac{1}{2})^{-1} =  2 \prod_{n=1}^\infty (\frac{2n}{2n-1})(\frac{2n}{2n+1})
\]
\end{definition}

We note that since $x=\frac{1}{2}$ is not a zero of $F(x)$, we have that $\pi_{\mathrm{amp}}$ is finite and non-zero.
\begin{theorem} 
Let $\pi$ denote the arc-length of the half-circle of radius 1. Then
\[
\pi_{\mathrm{amp}} = \pi_{\mathrm{freq}} = \pi
\]
\end{theorem}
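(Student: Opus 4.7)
The plan is to bridge $F(x)$ with the geometry of the circle through the rescaled function $G(x) = \pi_{\mathrm{amp}} F(x/\pi_{\mathrm{freq}})$. First I would transport the established properties of $F$ to $G$: the identity $F'' = -\pi_{\mathrm{freq}}^2 F$ from Corollary \ref{corollary} yields $G'' = -G$; the power series \eqref{F power series} gives $G(0) = 0$ and $G'(0) = \pi_{\mathrm{amp}}/\pi_{\mathrm{freq}}$; and Lemma \ref{F inc} together with the definition of $\pi_{\mathrm{amp}}$ shows that on $[0,\pi_{\mathrm{freq}}/2]$ the function $G$ is strictly increasing from $0$ to its maximum value $G(\pi_{\mathrm{freq}}/2) = \pi_{\mathrm{amp}} F(1/2) = 1$.

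Second, I would exploit the energy invariant of the ODE $G'' = -G$. Differentiating $G(x)^2 + G'(x)^2$ gives $2GG' + 2G'G'' = 0$, so this quantity is constant. Evaluating at $x=0$ yields $(\pi_{\mathrm{amp}}/\pi_{\mathrm{freq}})^2$, while evaluating at $x = \pi_{\mathrm{freq}}/2$ yields $1$, since the maximum there forces $G'(\pi_{\mathrm{freq}}/2) = 0$ while $G(\pi_{\mathrm{freq}}/2) = 1$. Both of $\pi_{\mathrm{amp}}$ and $\pi_{\mathrm{freq}}$ being positive, this forces $\pi_{\mathrm{amp}} = \pi_{\mathrm{freq}}$.

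Third, I would identify this common constant with the geometric $\pi$ via an arc-length computation. The relation $G^2 + (G')^2 = 1$ says the curve $\gamma(t) = (G'(t), G(t))$ lies on the unit circle. On $[0, \pi_{\mathrm{freq}}/2]$, the strict monotonicity of $G$ together with $G'(t)^2 = 1 - G(t)^2 \geq 0$ forces $G'(t) \geq 0$, so $\gamma$ traces the closed quarter-arc from $(1,0)$ to $(0,1)$ as a continuous bijection. The speed is
\[
\|\gamma'(t)\| = \sqrt{G''(t)^2 + G'(t)^2} = \sqrt{G(t)^2 + G'(t)^2} = 1,
\]
so the length of that quarter-arc equals $\pi_{\mathrm{freq}}/2$. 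By the geometric definition of $\pi$ as the half-circumference of the unit circle, the quarter-arc has length $\pi/2$, so $\pi_{\mathrm{freq}} = \pi$.

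The main obstacle is the bookkeeping in the arc-length step: one must verify that $\gamma$ is a genuine bijective parametrization of exactly the closed quarter-arc, which reduces to confirming that $G$ is a homeomorphism $[0,\pi_{\mathrm{freq}}/2] \to [0,1]$ (from Lemma \ref{F inc} and continuity) and that $G'(t)$ is precisely the nonnegative branch of $\sqrt{1 - G(t)^2}$ on this interval. Once these are in place, the energy invariant and the geometric definition of $\pi$ combine to give both equalities $\pi_{\mathrm{amp}} = \pi_{\mathrm{freq}} = \pi$.
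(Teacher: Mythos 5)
Your proposal is correct and follows essentially the same route as the paper: the rescaled function $G$, the constancy of $G(x)^2+G'(x)^2$ evaluated at $x=0$ and at the maximum $x=\pi_{\mathrm{freq}}/2$ to get $\pi_{\mathrm{amp}}=\pi_{\mathrm{freq}}$, and then the unit-speed parametrization of the circle to identify the constant with $\pi$. The only cosmetic difference is that you measure the quarter-arc over $[0,\pi_{\mathrm{freq}}/2]$ where the paper measures the half-arc over $[-\pi_{\mathrm{freq}}/2,\pi_{\mathrm{freq}}/2]$.
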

\begin{proof} 
We first prove 
\[
\pi_{\mathrm{amp}} = \pi_{\mathrm{freq}}.
\]
Let $G(x)$ denote the function 
\[
G(x) = \pi_{\mathrm{amp}} F(\frac{x}{\pi_{\mathrm{freq}}}).
\]
By the definition of $\pi_{\mathrm{amp}}$, the function $G(x)$ has maximum 1 for all $x \in \mathbb{R}$ and is attained at $\displaystyle x = \frac{\pi_{\mathrm{freq}}}{2}$. From equation \eqref{F power series}, we have
\[
G(x) = \frac{\pi_{\mathrm{amp}}}{\pi_{\mathrm{freq}}}\sum_{k=0}^\infty (-1)^k \frac{x^{2k+1}}{(2k+1)!}. 
\] 
Let $g(x)$ denote the function
\[
g(x) = \sum_{k=0}^\infty (-1)^k \frac{x^{2k+1}}{(2k+1)!}.
\]
Thus 
\[
g''(x) = -g(x).
\]
Then 
\begin{equation} \label{g g' 1}
g(x)^2 + g'(x)^2 = 1.
\end{equation}
This follows because the derivative of the left side is 0, and $g(0)=0$ and $g'(0)=1$. 

It follows that 
\[
G(x)^2 + G'(x)^2 = (\frac{\pi_{\mathrm{amp}}}{\pi_{\mathrm{freq}}})^2.
\]
Evaluating at $\displaystyle x = \frac{\pi_{\mathrm{freq}}}{2}$, a local maximum of $G(x)$, gives 
\[
G(\frac{\pi_{\mathrm{freq}}}{2}) = 1 \text{ and } G'( \frac{\pi_{\mathrm{freq}}}{2}) = 0, 
\]
which shows 
\[
 (\frac{\pi_{\mathrm{amp}}}{\pi_{\mathrm{freq}}})^2 = 1.
\]
This proves 
\[
\pi_{\mathrm{amp}} = \pi_{\mathrm{freq}}.
\]

Now $G(x)$ is increasing on $[-\frac{\pi_{\mathrm{freq}}}{2}, \frac{\pi_{\mathrm{freq}}}{2}]$ and $G'(x)$ is even and increasing on $[-\frac{\pi_{\mathrm{freq}}}{2}, 0]$. This follows from the periodicity of $F(x)$ and Lemma \ref{F inc}. Since 
\[
G(x)^2 + G'(x)^2=1,
\]
 we have that 
\[
(G(x), G'(x)), \, \, \, -\frac{\pi_{\mathrm{freq}}}{2}\leq x\leq \frac{\pi_{\mathrm{freq}}}{2}
\]
is a parametrization of the half-circle of radius 1 in the upper-half plane. Computing its arc-length gives 
\begin{align*}
\int_{-\frac{\pi_{\mathrm{freq}}}{2}}^{\frac{\pi_{\mathrm{freq}}}{2}} \sqrt{G'(x)^2+G''(x)^2} \, dx &= \int_{-\frac{\pi_{\mathrm{freq}}}{2}}^{\frac{\pi_{\mathrm{freq}}}{2}} \sqrt{G'(x)^2+G(x)^2} \, dx\\ 
&=\int_{-\frac{\pi_{\mathrm{freq}}}{2}}^{\frac{\pi_{\mathrm{freq}}}{2}} 1 \, dx\\ 
&= \pi_{\mathrm{freq}}.
\end{align*}
This completes the proof.
\end{proof}
 We note that $\pi_{\mathrm{amp}}$ is the well-known Wallis product for $\pi$. 
\section{Further Work} 

\begin{itemize} 

\item See if there is a bijection to establish the identity 
\[
\frac{d}{dx} \sin(x) = \cos(x)
\]
using their product formulas.

\item Find a bijection for the equality
\[
\pi_{\mathrm{freq}} = \pi_{\mathrm{amp}}. 
\]

\item Find a direct proof of equation \eqref{g g' 1} without using calculus if one is not known.

\item See if bijections can be found in evaluating the MZV-like series corresponding to the power series and their derivatives
\[
\sum_{n=0}^\infty (-1)^n\frac{x^{mn}}{(mn)!} = \frac{1}{m}\sum_{k=0}^{m-1} e^{x e^{\frac{\pi i }{m}} e^{\frac{2 \pi k i }{m}}}
\]
for even integer $m>1$. For odd integer $m>1$ see if bijections hold for $\zeta(\{ m\}^k)$ with either complete factorization or as a sum of terms. 

\item The evaluation of $\zeta(\{ 2\}^k)$ as a rational multiple of $\pi^{2k}$ implies that $\zeta(2k)$ is also a rational multiple of $\pi^{2k}$, for example, by the Newton-Girard identities. Use these identities or the generating function of the Bernoulli numbers to find expressions for $|B_{2k}|$ that show its positivity. Expressions showing positivity could be useful for studying the quantities 
\[
\sum_{n=1}^\infty \frac{e^{-\pi n^2}}{(\pi n^2)^k}
\]
for $k \geq 1$. We would like to express the above quantity using rational approximations that also imply positivity. One could use the sum 
\[
e^{-\pi n^2} \approx \sum_{m=0}^M  \frac{(-\pi n^2)^m}{m!}
\]
and then use rational approximations for $\pi$, but the initial terms in the sum oscillate largely from positive to negative and one has to take $M$ so large depending on $n$ until the sum becomes a good approximation. However we may use 
\begin{align*}
e^{-\pi n^2} &= e^{-\frac{1}{1-(1-\frac{1}{\pi n^2})}} \\ 
&= e^{-\sum_{m=0}^\infty (1-\frac{1}{\pi n^2})^m}\\ 
&= \prod_{m=0}^\infty e^{-(1-\frac{1}{\pi n^2})^m}\\
\end{align*}
And $e^{-v}$ for $0 <v <1$ can be written as a series with positive terms using the  derangement numbers. Then the series
\[
\sum_{n=1}^\infty \frac{1}{(\pi n^2)^k}(1-\frac{1}{\pi n^2})^m
\]
can be evaluated using $B_{2k}$ as a sum involving $\frac{\zeta(2k)}{\pi^k}$ which becomes a rational polynomial in $\pi$ instead of $\pi^2$. This move from $\pi^{2}$ to $\pi$ may suggest to go from using the series for $\pi^{2}$ as $6 \zeta(2)$ to the Wallis product. 

These rational expressions can be applied to the Taylor series coefficients $a_n$ of $\xi(s)$ using the formula for the upper incomplete Gamma function found in \cite{DeFranco 2}. Perhaps a these rational expressions can be used to obtain a third proof of the positivity of the $a_n$, apart from the one using the Polya-Schur kernel or the one found in \cite{DeFranco 1}.  

\item Apply this bijection to $q$-analogues of $\sin(x)$. The $q$-analogue of Gosper using the Wallis product would correspond to a $q$-analogue of $\pi_{\mathrm{amp}}$. 

 \end{itemize}

\end{document}